\theoremstyle{plain}
\newtheorem*{rep@theorem}{\rep@title}
\newcommand{\newreptheorem}[2]{%
\newenvironment{rep#1}[1]{%
 \def\rep@title{#2 \ref{##1}}%
 \begin{rep@theorem}}%
 {\end{rep@theorem}}}
\newtheorem{theorem}{Theorem}[section]
\newtheorem{proposition}[theorem]{Proposition}
\newtheorem{lemma}[theorem]{Lemma}
\newtheorem{question}[theorem]{Question}
\theoremstyle{definition}
\newtheorem{definition}[theorem]{Definition}
\theoremstyle{remark}
\newtheorem*{remark}{Remark}
\newcommand{\RiemannSphere}{\widehat{\mathbb{C}}}
\renewcommand {\tilde} {\widetilde}
\begin{document}

\title{Removability and non-injectivity of conformal welding}

\date{March 2, 2017}

\author[M. Younsi]{Malik Younsi}

\address{Department of Mathematics, University of Washington, Seattle, WA 98195-4350, United States.}
\email{malik.younsi@gmail.com}

\keywords{Conformal welding, removability, flexible curves, capacity.}
\subjclass[2010]{primary 30C35; secondary 37E10, 30C85.}

\begin{abstract}
We construct a (non-removable) Jordan curve $\Gamma$ and a non-M\"{o}bius homeomorphism of the Riemann sphere which is conformal on the complement of $\Gamma$ and maps the curve $\Gamma$ onto itself. The curve is flexible in the sense of Bishop and may be taken to have zero area. The existence of such curves and conformal homeomorphisms is closely related to the non-injectivity of conformal welding.

\end{abstract}

\maketitle

\section{Introduction}

Let $\mathbb{D}$ be the open unit disk, let $\mathbb{D}^{*}:= \RiemannSphere \setminus \overline{\mathbb{D}}$ be the complement of the closed unit disk in the Riemann sphere $\RiemannSphere$, and let $\mathbb{T}:= \partial \mathbb{D}$ be the unit circle. Given a Jordan curve $\Gamma$, let $f:\mathbb{D} \to \Omega$ and $g:\mathbb{D}^{*} \to \Omega^{*}$ be conformal maps onto the bounded and unbounded complementary components of $\Gamma$ respectively. Then $f$ and $g$ extend to homeomorphisms on the closure of their respective domains, so that $h_\Gamma:=g^{-1} \circ f : \mathbb{T} \to \mathbb{T}$ defines an orientation-preserving homeomorphism of the unit circle onto itself, called the \textit{conformal welding homeomorphism} of $\Gamma$.

Note that $h_\Gamma$ is uniquely determined by $\Gamma$ up to pre- and post-composition by automorphisms of the unit disk. Moreover, if $T$ is a M\"{o}bius transformation, then $\Gamma$ and $T(\Gamma)$ have the same conformal welding homeomorphism, so that the \textit{conformal welding correspondence}
$$\mathcal{W}: [\Gamma] \mapsto [h_\Gamma]$$
is well-defined, from the family of Jordan curves, modulo M\"{o}bius equivalence, to the family of orientation-preserving homeomorphisms of the circle, modulo pre- and post-composition by automorphisms of the disk.

This correspondence between Jordan curves and circle homeomorphisms has appeared over the years to be of central importance in a wide variety of areas of mathematics and applications, such as Teichm\"{u}ller theory, Kleinian groups, computer vision and numerical pattern recognition (\cite{EKS},\cite{SHM}), and so forth. For more information on the applications of conformal welding, the interested reader may consult the survey article \cite{HAM2}. We also mention that recent years have witnessed a strong renewal of interest in conformal welding as other variants and generalizations have been introduced and developed, such as generalized conformal welding (\cite{BIS2}, \cite{HAM}), random conformal welding \cite{AST}, conformal welding for finitely connected regions \cite{MAR}, conformal welding of random surfaces \cite{SHE} and conformal laminations of trees (\cite{MARS}, \cite{ROH}), including applications to Shabat polynomials and Grothendieck's \textit{dessins d'enfants}. Conformal laminations are also related to the recent groundbreaking work of Miller and Sheffield on the relationship between the Brownian map and Liouville Quantum Gravity (\cite{MILS}, \cite{MILS2}, \cite{MILS3}).

It is well-known that the conformal welding correspondence $\mathcal{W}$ is not surjective; in other words, there are orientation-preserving homeomorphisms of the circle (even analytic everywhere except at one point) which are not conformal welding homeomorphisms. On the other hand, every quasisymmetric homeomorphism $h:\mathbb{T} \to \mathbb{T}$ is the conformal welding of some Jordan curve (in this case, a quasicircle). Here \textit{quasisymmetric} means that adjacent arcs $I,J \subset \mathbb{T}$ of equal length are mapped by $h$ onto arcs of comparable length :

$$M^{-1} \leq \frac{|h(I)|}{|h(J)|} \leq M.$$
The fact that every such $h$ is a conformal welding homeomorphism is usually referred to as the \textit{fundamental theorem of conformal welding} and was first proved by Pfluger \cite{PFL} in 1960. Another proof based on quasiconformal mappings was published shortly after by Lehto and Virtanen \cite{LEHV}. See also the papers of Bishop \cite{BIS2} and Schippers--Staubach \cite{SCS}. We also mention that other sufficient conditions for a given circle homeomorphism $h$ to be a welding were obtained by Lehto \cite{LEH} and Vainio \cite{VAI}, in terms of $h$ being sufficiently ``nice''. In Section \ref{sec1}, we recall a deep theorem of Bishop \cite{BIS2} saying that on the other hand, any ``wild'' enough $h$ is also the welding of some Jordan curve. Finding a complete characterization of conformal welding homeomorphisms is most likely a very difficult problem.

This paper, however, deals with the (non) injectivity of the welding correspondence $\mathcal{W}$. Recall that if $\Gamma$ is a Jordan curve, then $T(\Gamma)$ has the same welding homeomorphism $h_\Gamma$, for any M\"{o}bius transformation $T$. Are these the only curves with this property?

If $\Gamma$ and $\tilde{\Gamma}$ are two Jordan curves such that $h_\Gamma = h_{\tilde{\Gamma}}$, then there are conformal maps $f:\mathbb{D} \to \Omega$, $g:\mathbb{D}^{*} \to \Omega^{*}$, $\tilde{f} : \mathbb{D} \to \tilde{\Omega}$, $\tilde{g}: \mathbb{D}^{*} \to \tilde{\Omega}^{*}$ such that
$$g^{-1} \circ f = \tilde{g}^{-1} \circ \tilde{f}$$
on $\mathbb{T}$, i.e.
$$\tilde{g} \circ g^{-1} = \tilde{f} \circ f^{-1}$$
on $\Gamma$, where $\tilde{\Omega}, \tilde{\Omega}^{*}$ are the bounded and unbounded complementary components of the curve $\tilde{\Gamma}$. It follows that the map $F: \RiemannSphere \to \RiemannSphere$ defined by
\begin{displaymath}
F(z) = \left\{ \begin{array}{ll}
(\tilde{f} \circ f^{-1})(z) & \textrm{if $z \in \Omega \cup \Gamma$}\\
(\tilde{g} \circ g^{-1})(z) & \textrm{if $z \in \Omega^{*}$}\\
\end{array} \right.
\end{displaymath}
is a homeomorphism conformal on $\RiemannSphere \setminus \Gamma$ which maps the curve $\Gamma$ onto the curve $\tilde{\Gamma}$. This shows that if $\Gamma$ is conformally removable, then $\Gamma$ uniquely corresponds to its conformal welding homeomorphism $h_\Gamma$, up to M\"{o}bius equivalence.

\begin{definition}
We say that a compact set $E \subset \mathbb{C}$ is \textit{conformally removable} if every $F \in \operatorname{CH}(E)$ is M\"{o}bius, where $\operatorname{CH}(E)$ is the collection of homeomorphisms $F:\RiemannSphere \to \RiemannSphere$ which are conformal on $\RiemannSphere \setminus E$.
\end{definition}

Single points, smooth curves and more generally, sets of $\sigma$-finite length, are all conformally removable. On the other hand, the theory of quasiconformal mappings can be used to prove that sets of positive area are never conformally removable. The converse is well-known to be false, and there exist nonremovable sets (even Jordan curves) of Hausdorff dimension one (\cite{BIS}, \cite{KAU}) and removable sets of Hausdorff dimension two (\cite[Chapter V, Section 3.7]{LEHV2}). In fact, no geometric characterization of conformally removable sets is known. We also mention that this notion of removability appears naturally in the study of various important problems in Complex Analysis and related areas, such as Koebe's uniformization conjecture (\cite{HES}, \cite{YOU2}) and the MLC conjecture on the local connectivity of the Mandelbrot set (\cite{KAH}, \cite{KAL}, \cite{KAL2}). See also the survey article \cite{YOU} for more information.

Now, recall that if a Jordan curve $\Gamma$ is conformally removable, then $\Gamma$ uniquely corresponds to its conformal welding homeomorphism, modulo M\"{o}bius equivalence. The starting point of this paper is the following question.

\begin{question}
\label{ques1}
Does the converse hold? Namely, if $\Gamma$ is a non-removable Jordan curve, does there necessarily exist another curve having the same welding homeomorphism, but which is not a M\"{o}bius image of $\Gamma$?
\end{question}

Several papers in the literature seem to imply that the answer is trivially yes, either without proof or with the following argument :
\\

\textit{If $\Gamma$ is not removable, then there exists a non-M\"{o}bius $F \in \operatorname{CH}(\Gamma)$. But then the curve $F(\Gamma)$ has the same welding homeomorphism as $\Gamma$, and is not M\"{o}bius equivalent to it, since $F$ is not a M\"{o}bius transformation}
\\

See e.g. \cite[Lemma 2]{OIK}, \cite[Corollary II.2]{KNS}, \cite[Section 4]{HAM}, \cite[Corollary 1]{BIS3}, \cite[p.324--325]{BIS}, \cite[Section 3]{HAM2}, \cite[Remark 2]{BIS2}, \cite[Section 2.3]{AST}, \cite[Corollary 1.4]{LIR}).

Although it is true and easy to see that $\Gamma$ and $F(\Gamma)$ have the same welding homeomorphism, it is not clear at all that these two curves are not M\"{o}bius equivalent, since there could a priori exist a M\"{o}bius transformation $T$ such that $F(\Gamma)=T(\Gamma)$, even though $F$ itself is not M\"{o}bius. As far as we know, this remark first appeared in Maxime Fortier Bourque's Master's Thesis \cite{MFB}. The question of whether such $\Gamma, F$ and $T$ as above actually exist was, however, left open. In this paper, we answer that question in the affirmative.

\begin{theorem}
\label{mainthm}
There exists a Jordan curve $\Gamma$ and a non-M\"{o}bius homeomorphism $F: \RiemannSphere \to \RiemannSphere$ conformal on $\RiemannSphere \setminus \Gamma$ such that $F(\Gamma)=\Gamma$. Moreover, the curve $\Gamma$ may be taken to have zero area.
\end{theorem}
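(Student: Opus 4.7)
The plan is to first reformulate Theorem~\ref{mainthm} in an equivalent, slightly weaker-looking form, and then invoke Bishop's theory of flexible Jordan curves recalled in Section~\ref{sec1}. The reformulation is the following: it suffices to produce a Jordan curve $\Gamma$ and a non-M\"{o}bius $F_0 \in \operatorname{CH}(\Gamma)$ whose boundary values $F_0|_\Gamma$ coincide with those of some M\"{o}bius transformation $T$. Indeed, $F := T^{-1} \circ F_0$ then lies in $\operatorname{CH}(\Gamma)$, satisfies $F|_\Gamma = \operatorname{id}_\Gamma$ (so $F(\Gamma) = \Gamma$ as a set), and is non-M\"{o}bius precisely because $\Gamma$ is non-removable and we can arrange $F_0 \neq T$ off $\Gamma$. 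Equivalently, I must exhibit a non-removable Jordan curve whose welding class contains two distinct M\"{o}bius-equivalent representatives related by a non-trivial conformal welding map.

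The construction would be a two-sided, simultaneous version of Bishop's inductive construction of flexible curves. Fix in advance a non-identity M\"{o}bius transformation $T$, say a Euclidean translation. Bishop's procedure produces a Jordan curve as the limit of a nested sequence of approximations while leaving considerable freedom in the placement of a countable dense subset of the curve. I would run two instances of his construction in parallel, producing approximations $\Gamma_n$ and $\Gamma_n'$ that share the same approximate welding, using a back-and-forth scheme across the stages to align countably many marked points so that, in the limit, $\Gamma' = T(\Gamma)$ exactly. The non-M\"{o}bius map $F_0 \in \operatorname{CH}(\Gamma)$ would then be assembled from the limiting conformal maps as $\tilde{f} \circ f^{-1}$ on the bounded complementary component and $\tilde{g} \circ g^{-1}$ on the unbounded one, exactly as in the displayed formula in the introduction. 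The non-M\"{o}bius character of $F_0$ would be enforced by a deliberate non-conformal mismatch, at some fixed early stage of the construction, of the interior of the approximating domains; this mismatch leaves the boundary behavior untouched and persists through the limit precisely because $\Gamma$ is not conformally removable.

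The main obstacle is to implement the matching scheme $\Gamma' = T(\Gamma)$ without sacrificing any of the three key properties of Bishop's construction: (i) the limiting curve $\Gamma$ must remain genuinely non-removable, so that the interior mismatch survives and $F_0 \neq T$; (ii) the approximating conformal maps must converge to a bona fide homeomorphism of $\RiemannSphere$, which requires uniform control on conformal moduli at each stage to prevent degeneration; and (iii) Bishop's area-decay estimates must continue to hold so that $\Gamma$ has zero area. I expect the compatibility of (i) with the bilateral matching to be the most delicate point, since the constraint $\Gamma' = T(\Gamma)$ is rather rigid and could in principle force $\Gamma$ to become removable or to lose the wildness needed for the welding class to admit distinct M\"{o}bius-equivalent representatives. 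Once these compatibility issues are resolved within the framework of Section~\ref{sec1}, the resulting $F = T^{-1} \circ F_0$ is the non-M\"{o}bius self-homeomorphism of $\Gamma$ promised by the theorem, and the zero-area claim comes for free from Bishop's estimates.
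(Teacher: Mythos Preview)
Your reformulation in the first paragraph is impossible, and this sinks the whole plan. You ask for a non-M\"{o}bius $F_0\in\operatorname{CH}(\Gamma)$ with $F_0|_\Gamma=T|_\Gamma$ pointwise, so that $F:=T^{-1}\circ F_0$ fixes $\Gamma$ pointwise. But any homeomorphism of $\RiemannSphere$ that is conformal off a Jordan curve $\Gamma$ and equals the identity on $\Gamma$ must be the identity everywhere: restricted to the bounded component $\Omega$ it is a conformal automorphism of $\Omega$ whose boundary extension is $\id_\Gamma$, and conjugating by a Riemann map gives an element of $\operatorname{Aut}(\UnitDisk)$ fixing $\UnitCircle$ pointwise, hence the identity; the same holds on $\Omega^*$. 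So $F=\id$, and $F_0=T$ is M\"{o}bius after all. The theorem genuinely requires $F$ to permute $\Gamma$ nontrivially, not to fix it pointwise; the phrase ``$F(\Gamma)=\Gamma$'' in the statement is set-wise only.

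If you weaken your goal to the set-wise condition $F_0(\Gamma)=T(\Gamma)$, your parallel-construction sketch still lacks the key ingredient: a concrete mechanism guaranteeing that the assembled map $F_0$ is \emph{not} $T$. ``Deliberate non-conformal mismatch at an early stage'' is not a mechanism, because Bishop's iteration rewrites the conformal maps at every stage and there is no reason an early perturbation survives the limit in a controllable way. The paper's proof supplies exactly this missing control through a different idea: it first observes that any $F\in\operatorname{CH}(\Gamma)$ with $F(\Gamma)=\Gamma$ forces the welding $W=h_\Gamma$ to satisfy a functional equation $W\circ\sigma=\tau\circ W$ for some $\sigma,\tau\in\operatorname{Aut}(\UnitDisk)$. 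It then works backward, building a log-singular $W$ with this property for parabolic $\sigma,\tau$ conjugate to $z\mapsto z+a$ and $z\mapsto z+b$ with $a\neq b$. Bishop's Theorem~\ref{thmBis} is invoked as a black box to realize $W$ as a welding with the Riemann maps uniformly close to a fixed M\"{o}bius chart; the approximation inequalities then force any putative M\"{o}bius candidate for $F$ to be a translation by some $d$ with $|d-a|\le 2\eps$ and $|d-b|\le 2\eps$, contradicting $a\neq b$. The non-M\"{o}bius conclusion thus comes from quantitative estimates built into Bishop's theorem, not from an ad hoc mismatch.
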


The construction is based on a result of Bishop \cite{BIS2} characterizing the conformal welding homeomorphisms of so-called flexible curves.

Theorem \ref{mainthm} shows that the above argument claiming to answer Question \ref{ques1} is in fact incorrect, and whether removability really characterizes injectivity of conformal welding remains unknown.

The remainder of the paper is organized as follows. In Section \ref{sec1}, we recall Bishop's characterization of the conformal welding homeomorphisms of flexible curves. Then, in Section \ref{sec2}, we use this result to prove Theorem \ref{mainthm}. In Section \ref{sec3}, we discuss the non-injectivity of conformal welding for curves of positive area. Finally, Section \ref{sec4} contains some open problems related to Question \ref{ques1}.

\section{Flexible curves and log-singular homeomorphisms}
\label{sec1}

We first need the definition of logarithmic capacity, following \cite{BIS2}. For $E \subset \mathbb{T}$ Borel, let $\mathcal{P}(E)$ denote the collection of all Borel probability measures on $E$.

\begin{definition}
Let $\mu \in \mathcal{P}(E)$.

\begin{enumerate}[\rm(i)]
\item The \textit{energy} of $\mu$, noted $I(\mu)$, is given by
$$I(\mu):= \int \int \log{\frac{2}{|z-w|}} \, d\mu(z) \, d\mu(w).$$

\item The \textit{logarithmic capacity} of $E$, noted $\operatorname{cap}(E)$, is defined as
$$\operatorname{cap}(E):= \frac{1}{\inf \{I(\mu): \mu \in \mathcal{P}(E) \} }.$$
\end{enumerate}
\end{definition}

It is well-known that logarithmic capacity is nonnegative, monotone and countably subadditive (see e.g. \cite{CAR}). We will also need the simple fact that bi-H\"{o}lder homeomorphisms of the circle preserve sets of zero capacity.

We can now define log-singular homeomorphisms.

\begin{definition}
Let $I,J$ be two subarcs of the unit circle. An orientation-preserving homeomorphism $h: I \to J$ is \textit{log-singular} if there exists a Borel set $E \subset I$ such that both $E$ and $h(I \setminus E)$ have zero logarithmic capacity.
\end{definition}

The following inductive construction of log-singular homeomorphisms was outlined in \cite[Remark 9]{BIS2}. We reproduce it here for the reader's convenience.

\begin{proposition}
\label{prop1}
Let $I,J$ be two subarcs of $\mathbb{T}$. Then there exists a log-singular homeomorphism $h:I \to J$.
\end{proposition}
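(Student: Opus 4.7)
\emph{Plan.} I would construct $h$ as the uniform limit of a sequence of piecewise linear homeomorphisms $h_n: I \to J$ produced by successively refining compatible partitions of $I$ and $J$, in the spirit of a doubly-asymmetric Cantor staircase. At stage $n$, each arc of the current partition of $I$ is split into three sub-arcs with length ratios $(\epsilon_n, 1-2\epsilon_n, \epsilon_n)$, labelled $L, M, R$; the corresponding arc of $J$ is split into three sub-arcs with ratios $((1-\delta_n)/2,\, \delta_n,\, (1-\delta_n)/2)$ carrying the same labels. The map $h_n$ sends each stage-$n$ arc of $I$ affinely onto the arc of $J$ with the same address in $\{L,M,R\}^n$. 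Choosing $\epsilon_n, \delta_n$ so that they decrease to zero quickly forces the diameters of stage-$n$ arcs to tend to zero uniformly; this guarantees that $h_n$ and $h_n^{-1}$ both converge uniformly, and the limit $h: I \to J$ is an orientation-preserving homeomorphism.

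\emph{The singular set.} Each $x \in I$ acquires a well-defined address $(a_n(x)) \in \{L,M,R\}^{\mathbb{N}}$, and $h$ preserves addresses by construction. I set
\begin{equation*}
E := \{x \in I : a_n(x) \in \{L,R\} \text{ for infinitely many } n\}.
\end{equation*}
Then $I \setminus E$ is the set of points whose address is eventually constantly equal to $M$, and $h(I \setminus E)$ is the analogous subset of $J$ (points whose $J$-address is eventually $M$). Since the total length of the stage-$n$ $L$- and $R$-arcs in $I$ equals $2\epsilon_n|I|$, the Borel--Cantelli lemma gives $|E|=0$ whenever $\sum \epsilon_n < \infty$; the symmetric estimate on the $J$-side gives $|h(I\setminus E)|=0$ whenever $\sum \delta_n < \infty$.

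\emph{Capacity upgrade and the main obstacle.} The heart of the argument is to upgrade these measure statements to capacity statements. Taking $\epsilon_n$ and $\delta_n$ to decay extremely fast, say $\epsilon_n = \delta_n = 2^{-2^n}$, I would argue directly from the energy-integral definition of $\operatorname{cap}$: any Borel probability measure $\mu$ supported on $E$ must, for infinitely many $n$, place a non-negligible part of its mass inside the extraordinarily short stage-$n$ $L$- or $R$-arcs, and the contribution to $\iint \log(2/|z-w|)\,d\mu\,d\mu$ coming from these concentrations of mass is forced to be infinite. Hence $I(\mu)=\infty$ for every such $\mu$, so $\operatorname{cap}(E)=0$; the completely symmetric argument on the $J$-side yields $\operatorname{cap}(h(I\setminus E))=0$. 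The main obstacle is exactly this energy estimate: because logarithmic capacity is not countably subadditive, one cannot simply sum up the capacities of the stage-$n$ pieces, and one must exploit the self-similar Cantor-like geometry of $E$ (and of its image) to rule out every candidate equilibrium measure simultaneously with preserving uniform convergence of the $h_n$.
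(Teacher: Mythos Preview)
Your construction does not produce a homeomorphism. With $\epsilon_n\to 0$ and $\sum_n\epsilon_n<\infty$, the stage-$n$ arc of $I$ with constant address $M^n$ has length $|I|\prod_{k\le n}(1-2\epsilon_k)$, which tends to a \emph{positive} limit; so the diameters of the stage-$n$ arcs of $I$ do \emph{not} go to zero uniformly, contrary to what you assert. On the $J$ side the matching arc has length $|J|\prod_{k\le n}\delta_k\to 0$. Consequently the uniform limit $h$ collapses an entire subarc of $I$ to a single point of $J$ and is not injective (equivalently: $h_n$ is uniformly Cauchy but $h_n^{-1}$ is not). The paper's construction sidesteps exactly this difficulty by first cutting every current arc into $n$ equal pieces before performing the red/blue split; that extra subdivision forces all diameters to $0$ on both sides while still leaving enough freedom to make the red union, and the image of the blue union, have capacity below $2^{-n}$ at stage $n$.

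Your ``main obstacle'' is also a misconception in this setting. With the kernel $\log\frac{2}{|z-w|}$ used here, which is nonnegative on $\mathbb{T}$, the resulting logarithmic capacity \emph{is} countably subadditive; the paper states this explicitly and uses it precisely where you say one cannot. Once subadditivity is available there is no need for a direct energy estimate on a Cantor-type set: one takes $E$ to be the $\limsup$ of the stage-$n$ red sets and gets $\operatorname{cap}(E)\le\sum_{n\ge m}2^{-n}$ for every $m$, and the symmetric bound handles $h(I\setminus E)$. Even if you repaired the injectivity issue, your proposed route through energy integrals would be working much harder than necessary.
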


\begin{proof}
Start with any orientation-preserving linear homeomorphism $h_1 : I \to J$. At the first step, divide $I$ into two subarcs, denoted by \textit{red} and \textit{blue} respectively, in such a way that the red subarc has small logarithmic capacity, say less than $2^{-1}$. Now, define a homeomorphism $h_2$ on $I$ which is linear on the red subarc and the blue subarc, and which satisfies $h_2(I)=h_1(I)=J$. We also construct $h_2$ such that it maps the blue subarc onto an arc of logarithmic capacity also less than $2^{-1}$.

Now, at the $n$-th step, suppose that $I$ has been divided into a finite number of arcs $\{I^{k,n}\}$, and that we have a homeomorphism $h_n:I \to J$ which is linear on each of those arcs. First, divide each $I^{k,n}$ into $n$ arcs of equal length $I_1^{k,n}, I_2^{k,n}, \dots, I_n^{k,n}$. Then, divide each $I_j^{k,n}$ into a red and a blue subarc, in such a way that the union of all the red subarcs has logarithmic capacity less than $2^{-n}$. Now, define a homeomorphism $h_{n+1}:I \to J$ which is linear on each of the red and blue subarcs, and which satisfies $h_{n+1}(I_j^{k,n}) = h_n(I_j^{k,n})$ for each $j,k$. We also construct $h_{n+1}$ so that the union of all the images of the blue subarcs under the map has logarithmic capacity less than $2^{-n}$.

It is not difficult to see that these maps $h_n$ converge to an orientation-preserving homeomorphism $h:I \to J$. Indeed, if $\epsilon>0$, then we can choose $N$ sufficiently large so that for $n \geq N$, the arcs $h_n(I_j^{k,n})$ all have length less than $\epsilon$. If $m \geq n$ and $x \in I$, then $x$ belongs to one of the arcs $I_j^{k,n}$ and $h_m(x)$ belongs to $h_n(I_j^{k,n})$ by construction, thus
$$|h_n(x)-h_m(x)| < \epsilon.$$
This shows that the sequence $(h_n)$ is uniformly Cauchy and therefore has a continuous limit $h:I \to J$, which has to be an orientation-preserving homeomorphism, by construction.

Finally, the map $h:I \to J$ is log-singular. Indeed, if $E$ is the set of points in $I$ which belong to infinitely many of the red subarcs, then
$$\operatorname{cap}(E) \leq \sum_{n \geq m} 2^{-n} \qquad (m \in \mathbb{N})$$
by the subadditivity of logarithmic capacity, so that $\operatorname{cap}(E) =0$. On the other hand, we have
$$h(\mathbb{T} \setminus E) \subset \bigcup_{m=1}^{\infty} \bigcap_{n \geq m} h_{n+1}(B_n),$$
where $B_n$ is the union of all the blue subarcs constructed at the $n$-th step. It follows that $h(\mathbb{T} \setminus E)$ is contained in a countable union of sets of zero logarithmic capacity, so that $\operatorname{cap}(h(\mathbb{T} \setminus E)) =0$, again by the subadditivity of logarithmic capacity.

\end{proof}

We will also need the following definition.

\begin{definition}
\label{def1}
A Jordan curve $\Gamma \subset \mathbb{C}$ is a \textit{flexible curve} if the following two conditions hold :

\begin{enumerate}[\rm(i)]
\item Given any Jordan curve $\tilde{\Gamma}$ and $\epsilon>0$, there exists $F \in \operatorname{CH}(\Gamma)$ such that
$$d(F(\Gamma),\tilde{\Gamma}) < \epsilon,$$
where $d$ is the Hausdorff distance.
\item Given points $z_1,z_2$ in each complementary component of $\Gamma$ and points $w_1,w_2$ in each complementary component of $\tilde{\Gamma}$, we can choose $F$ above so that $F(z_1)=w_1$ and $F(z_2)=w_2$.
\end{enumerate}
\end{definition}

One can think of a flexible curve $\Gamma$ as being ``highly''  non-removable, in the sense that $\operatorname{CH}(\Gamma)$ is very large. Examples of flexible curves with any Hausdorff dimension between 1 and 2 were constructed in \cite{BIS}.

There is a close relationship between flexible curves and log-singular homeomorphisms. Indeed, Bishop proved in \cite{BIS2} that an orientation-preserving homeomorphism $h: \mathbb{T} \to \mathbb{T}$ is log-singular if and only if it is the conformal welding of a flexible curve $\Gamma$. In particular, this implies that every such $h$ is the conformal welding of a dense family of curves, since if $F \in \operatorname{CH}(\Gamma)$ is as in Definition \ref{def1}, then $h$ is also the conformal welding homeomorphism of $F(\Gamma)$. We shall actually need the following stronger result, see \cite[Theorem 25]{BIS2}.

\begin{theorem}[Bishop]
\label{thmBis}
Let $h:\mathbb{T} \to \mathbb{T}$ be an orientation-preserving log-singular homeomorphism with $h(1)=1$, and let $f_0, g_0$ be two conformal maps of $\mathbb{D}$ and $\mathbb{D}^{*}$ respectively onto disjoint domains. Then for any $0<r<1$ and any $\epsilon>0$, there are conformal maps $f$ and $g$ of $\mathbb{D}$ and $\mathbb{D}^{*}$ onto the two complementary components of a Jordan curve $\Gamma$ satisfying the following conditions :

\begin{enumerate}[\rm(i)]
\item $h= g^{-1} \circ f$ on $\mathbb{T}$;
\item $|f(z)-f_0(z)| < \epsilon$ for all $z \in \mathbb{D}$ with $|z| \leq r$;
\item $|g(z)-g_0(z)| < \epsilon$ for all $z \in \mathbb{D}^{*}$ with $|z| \geq 1/r$;
\end{enumerate}

Moreover, the maps $f,g$ may be constructed such that $f(1)=g(1)=\infty$ and such that the curve $\Gamma$ has zero area.
\end{theorem}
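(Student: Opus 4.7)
The plan is to realize $\Gamma$ as the limit of quasicircles obtained from conformal weldings of quasisymmetric approximations to $h$, then verify that the limiting conformal maps witness $h$ as the welding of the limit curve. Concretely, I would first approximate $h$ uniformly on $\mathbb{T}$ by piecewise linear (hence quasisymmetric) homeomorphisms $h_n : \mathbb{T} \to \mathbb{T}$ with $h_n(1)=1$, for example by taking $h_n$ to be the $n$-th intermediate homeomorphism produced in the proof of Proposition \ref{prop1}. Since each $h_n$ is quasisymmetric, the fundamental theorem of welding furnishes a quasicircle $\Gamma_n$ together with conformal maps $f_n : \mathbb{D} \to \Omega_n$ and $g_n : \mathbb{D}^* \to \Omega_n^*$ satisfying $g_n^{-1} \circ f_n = h_n$ on $\mathbb{T}$. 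Using the Möbius ambiguity of $\Gamma_n$ together with the pre-composition freedom in $(f_n, g_n)$, one normalizes so that $f_n(1)=g_n(1)=\infty$ and so that $f_n$, $g_n$ approximate $f_0$, $g_0$ on the compact sets $\{|z| \leq r\}$ and $\{|z| \geq 1/r\}$, with errors shrinking in $n$ because $h_n \to h$ uniformly.

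Next I would extract a Carathéodory kernel limit to obtain conformal maps $f, g$ onto simply connected domains $\Omega, \Omega^*$, from which conditions (ii) and (iii) follow automatically. The main content, and the main obstacle, is to show that $\Omega \cap \Omega^* = \emptyset$, that $\Gamma := \partial \Omega = \partial \Omega^*$ is a genuine Jordan curve without pinch points, and that the welding identity $g^{-1} \circ f = h$ holds on $\mathbb{T}$. The log-singular hypothesis is exactly what makes this step work: with $E \subset \mathbb{T}$ and $h(\mathbb{T} \setminus E)$ the two sets of zero logarithmic capacity from the definition, the prime-end correspondences of $f$ on $\mathbb{T} \setminus E$ and of $g$ on $h(\mathbb{T} \setminus E)$ match by uniform passage to the limit in the identities $g_n^{-1} \circ f_n = h_n$, while the exceptional sets $E$ and $h(\mathbb{T} \setminus E)$ are too small in capacity to carry any identification between distinct prime ends of $\Omega$ or $\Omega^*$. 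Making this precise requires quantitative capacity/modulus estimates controlling the conformal moduli between adjacent red and blue subarcs at each stage of the construction, and is the delicate technical heart of Bishop's argument.

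Finally, for the zero-area conclusion, I would further constrain the inductive construction so that each $\Gamma_n$ is a piecewise smooth (hence planar-null) quasicircle contained in an open neighborhood $U_n$ of $\Gamma_{n-1}$ satisfying $|U_n| < 2^{-n}$; this is achievable because piecewise smooth quasicircles realizing a given quasisymmetric welding are dense in the appropriate topology, and the construction of $h_n$ can be tightened to produce $\Gamma_n$ with prescribed geometric control. Then $\Gamma \subset \bigcap_{n} \overline{U_n}$, so $\Gamma$ has zero planar Lebesgue measure, completing the construction.
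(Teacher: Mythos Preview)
Your scheme breaks down at the normalization step. You assert that ``using the M\"obius ambiguity of $\Gamma_n$ together with the pre-composition freedom in $(f_n,g_n)$'' one can arrange that $f_n$ and $g_n$ approximate the \emph{given} maps $f_0$ and $g_0$ on $\{|z|\le r\}$ and $\{|z|\ge 1/r\}$. But this is precisely what fails: each $h_n$ is quasisymmetric, so its welding curve $\Gamma_n$ is a quasicircle, hence conformally removable and therefore \emph{rigid}. The pair $(f_n,g_n)$ is determined by $h_n$ up to a single post-composition M\"obius map and pre-composition by disk automorphisms compatible with $h_n$ --- a finite-dimensional freedom. There is no mechanism by which $h_n\to h$ uniformly forces $f_n\to f_0$ and $g_n\to g_0$; on the contrary, for generic $f_0,g_0$ this is impossible at every finite stage. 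The approximation in (ii)--(iii) is the whole content of the theorem and is a manifestation of the \emph{flexibility} that only the limiting log-singular $h$ enjoys; you cannot bootstrap it from rigid intermediate weldings.

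Bishop's construction, as summarized in the Remark following the statement, avoids this by working quasiconformally rather than conformally at each stage: the approximants $f_n:\mathbb{D}\to\Omega_n$ and $g_n:\mathbb{D}^*\to\Omega_n^*$ are quasiconformal maps (with dilatation tending to $1$) onto smooth Jordan domains with \emph{disjoint closures}, i.e.\ separated by a genuine annular gap $A_n$, rather than onto the two sides of a single curve. That gap is exactly what supplies the room to make $f_n,g_n$ close to $f_0,g_0$ (which by hypothesis also have disjoint images). The limit curve $\Gamma$ sits inside $\bigcap_n A_n$, and choosing $|A_n|<2^{-n}$ yields zero area. Only at the very end is the measurable Riemann mapping theorem applied once to a global quasiconformal correction $\Phi$, turning $f,g$ into conformal maps; since $\Phi$ preserves null sets and can be taken to fix $\infty$, the conclusions $f(1)=g(1)=\infty$ and $|\Gamma|=0$ survive. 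Your outline conflates the roles of conformality and flexibility: one must relax conformality along the approximation to gain the freedom that (ii)--(iii) demand.
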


\begin{remark}

Since the last part of Theorem \ref{thmBis} was not stated explicitly in \cite{BIS2}, let us briefly explain how it follows from the construction. First, since $h(1)=1$, condition $(i)$ implies that $f(1)=g(1)$. Composing $f$ and $g$ by a M\"{o}bius transformation $T$ if necessary, we can assume that $f(1)=g(1)=\infty$. Note that if $f$ and $g$ approximate $T^{-1} \circ f_0$ and $T^{-1} \circ g_0$ on compact subsets of $\mathbb{D}$ and $\mathbb{D}^{*}$ respectively, then $T \circ f$ and $T \circ g$ approximate $f_0$ and $g_0$.

Now, to see that the curve $\Gamma$ can be taken to have zero area, note that the main part of the proof in \cite{BIS2} is to construct \textit{quasiconformal} mappings $f:\mathbb{D} \to \Omega$ and $g:\mathbb{D} \to \Omega^*$ onto the complementary components of a Jordan curve $\Gamma$ satisfying conditions (i), (ii) and (iii), and having quasiconstants close to 1. These maps $f$ and $g$ are obtained as limits of quasiconformal mappings $f_n : \mathbb{D} \to \Omega_n$ and $g_n : \mathbb{D}^{*} \to \Omega_n^{*}$ onto smooth Jordan domains with disjoint closures, with $\infty \in \Omega_n^*$. By construction, the curve $\Gamma$ is contained in the topological annulus $A_n:=\RiemannSphere \setminus (\Omega_n \cup \Omega_n^{*})$, for each $n$. Moreover, the domains $\Omega_n$ and $\Omega_n^{*}$ are of the form $\Omega_n:=F_n(t\mathbb{D})$ and $\Omega_n^*:=G_n(\mathbb{D}^{*}/t)$, where $t<1$ is sufficiently close to 1 and $F_n, G_n$ are quasiconformal mappings of $\mathbb{D}, \mathbb{D}^*$ onto the complementary components of a smooth Jordan curve $\Gamma_n$. Clearly, by taking $t$ closer to $1$ if necessary, we can assume that the topological annulus $A_n$ has area as small as we want, say less than $2^{-n}$. Then the curve $\Gamma$ will have zero area.

Finally, the last part of the proof is to apply the measurable Riemann mapping theorem to replace $f$ and $g$ by conformal maps $\Phi \circ f$ and $\Phi \circ g$, where $\Phi$ is a quasiconformal mapping of the sphere. Since $\Phi$ preserves sets of area zero and since it can be assumed to fix $\infty$ by composing with a M\"{o}bius transformation, the new conformal maps can also be taken so that they send $1$ to $\infty$ and map $\mathbb{T}$ onto a curve of zero area.

\end{remark}

\section{Conformal homeomorphisms fixing a curve}
\label{sec2}

We can now prove Theorem \ref{mainthm}. Recall that we want to construct a Jordan curve $\Gamma$ and a non-M\"{o}bius homeomorphism $F: \RiemannSphere \to \RiemannSphere$ conformal on $\RiemannSphere \setminus \Gamma$ such that $F(\Gamma)=\Gamma$.

The idea of the construction is the following. Suppose that such a curve $\Gamma$ and such a non-M\"{o}bius homeomorphism $F \in \operatorname{CH}(\Gamma)$ exist, and suppose that $F$ preserves the orientation of $\Gamma$. Let $f : \mathbb{D} \to \Omega$ and $g : \mathbb{D}^{*} \to \Omega^{*}$ be conformal maps onto the two complementary components of the curve. Then $F \circ f$ and $F \circ g$ are also conformal maps of $\mathbb{D}$ and $\mathbb{D}^{*}$ onto $\Omega$ and $\Omega^{*}$ respectively, so that
$$F \circ f = f \circ \sigma$$
and
$$F \circ g = g \circ \tau$$
for some $\sigma, \tau \in \operatorname{Aut}(\mathbb{D})$. Note that neither $\sigma$ nor $\tau$ is the identity, since otherwise $F$ would also be the identity, contradicting the fact that it is not M\"{o}bius. Now, since $F$ is continuous on $\Gamma$, we must have
$$f \circ \sigma \circ f^{-1} = g \circ \tau \circ g^{-1}$$
there, which can be rewritten as
\begin{equation}
\label{functionalequation}
W \circ \sigma = \tau \circ W
\end{equation}
on $\mathbb{T}$, where $W:=h_\Gamma$ is the conformal welding of $\Gamma$. We thus obtain a functional equation for the welding of the curve.

The strategy now is to proceed backward. Start with an orientation-preserving homeomorphism $W : \mathbb{T} \to \mathbb{T}$ satisfying the functional equation (\ref{functionalequation}) for some $\sigma, \tau \in \operatorname{Aut}(\mathbb{D})$. Suppose in addition that we can construct $W$ so that it is the conformal welding homeomorphism of some Jordan curve $\Gamma$, i.e. $W = g^{-1} \circ f$ where $f$ and $g$ are conformal maps of $\mathbb{D}$ and $\mathbb{D}^{*}$ respectively onto the two complementary components $\Omega$, $\Omega^{*}$ of $\gamma$. Then we can define a map $F$ conformal on both sides of $\gamma$ by

\begin{displaymath}
F(z) = \left\{ \begin{array}{ll}
(f \circ \sigma \circ f^{-1})(z) & \textrm{if $z \in \Omega$}\\
(g \circ \tau \circ g^{-1})(z) & \textrm{if $z \in \Omega^{*}$},\\
\end{array} \right.
\end{displaymath}
and the fact that $W=g^{-1} \circ f$ satisfies Equation (\ref{functionalequation}) implies that $F$ extends to a homeomorphism of the whole sphere. Clearly, the map $F \in \operatorname{CH}(\Gamma)$ fixes the curve $\Gamma$, and all that remains to prove is that we can choose $\sigma, \tau$ and $W$ such that $F$ is not a M\"{o}bius transformation.

The main difficulty here is that if we choose $W$ to be sufficiently nice, e.g. quasisymmetric, so that it is a conformal welding homeomorphism, then the curve $\Gamma$ will be removable and $F$ will necessarily be M\"{o}bius. In order to circumvent this difficulty, a promising approach is to construct log-singular homeomorphic solutions of the functional equation (\ref{functionalequation}).

\begin{lemma}
\label{lemlog}
Let $a,b>0$, and let $\phi$ be a M\"{o}bius transformation mapping the upper half-plane $\mathbb{H}$ onto the open unit disk $\mathbb{D}$ with $\phi(\infty)=1$, say
$$\phi(z) := \frac{z-i}{z+i}.$$
Define $\tilde{\sigma}, \tilde{\tau} \in \operatorname{Aut}(\mathbb{H})$ by $\tilde{\sigma}(z):=z+a$ and $\tilde{\tau}(z):=z+b$, and set
$$\sigma := \phi \circ \tilde{\sigma} \circ \phi^{-1}$$
and
$$\tau := \phi \circ \tilde{\tau} \circ \phi^{-1},$$
so that $\sigma, \tau \in \operatorname{Aut}(\mathbb{D})$. Then there exists a log-singular orientation-preserving homeomorphism $W: \mathbb{T} \to \mathbb{T}$ which satisfies
$$W \circ \sigma = \tau \circ W$$
\end{lemma}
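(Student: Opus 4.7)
The plan is to conjugate to the upper half-plane via $\phi$, where $\sigma,\tau$ become the translations $\tilde\sigma(z)=z+a$ and $\tilde\tau(z)=z+b$. The functional equation then reads $\tilde W(x+a)=\tilde W(x)+b$ for $\tilde W:=\phi^{-1}\circ W\circ\phi$, so $\tilde W$ is determined by its restriction to a fundamental domain. Translating this back to the circle, I will choose a fundamental arc $I_0\subset\mathbb{T}$ for the cyclic group generated by $\sigma$ (from $-1=\phi(0)$ to $\sigma(-1)=\phi(a)$, avoiding the parabolic fixed point $1$), and a fundamental arc $J_0$ for $\langle\tau\rangle$ (from $-1$ to $\tau(-1)=\phi(b)$). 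Using Proposition \ref{prop1}, I pick a log-singular orientation-preserving homeomorphism $W_0:I_0\to J_0$, normalized so that endpoints are sent to endpoints. I then define $W$ on $\mathbb{T}$ by $W(\sigma^n(x)):=\tau^n(W_0(x))$ for $x\in I_0$ and $n\in\mathbb{Z}$, together with $W(1):=1$.

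The arcs $\{\sigma^n(I_0)\}_{n\in\mathbb{Z}}$ tile $\mathbb{T}\setminus\{1\}$, and similarly for $\{\tau^n(J_0)\}$, so $W$ is a well-defined bijection of $\mathbb{T}$. Continuity on $\mathbb{T}\setminus\{1\}$ is automatic, since on each tile $W$ is the Möbius conjugate of the continuous map $W_0$ and the endpoint conditions force adjacent definitions to agree on common boundaries. Continuity at $1$ follows because $\sigma$ and $\tau$ are parabolic with unique fixed point $1$: the tile diameters shrink to $0$ as $|n|\to\infty$, so $W(x)\to 1$ whenever $x\to 1$. Orientation-preservation and the functional equation $W\circ\sigma=\tau\circ W$ then hold by construction (and force $W(1)=1$ anyway, since $W(1)$ must be a fixed point of $\tau$).

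For log-singularity, let $E_0\subset I_0$ be a Borel set provided by Proposition \ref{prop1} with $\operatorname{cap}(E_0)=0$ and $\operatorname{cap}(W_0(I_0\setminus E_0))=0$, and set $E:=\{1\}\cup\bigcup_{n\in\mathbb{Z}}\sigma^n(E_0)$. For each fixed $n$, the restriction of the Möbius map $\sigma^n$ to $\mathbb{T}$ is bi-Hölder, hence preserves zero logarithmic capacity by the fact recalled before the definition of log-singular maps; thus $\operatorname{cap}(\sigma^n(E_0))=0$. Countable subadditivity of logarithmic capacity then yields $\operatorname{cap}(E)=0$, and the same argument applied to $\tau^n$ shows $\operatorname{cap}(W(\mathbb{T}\setminus E))\leq\operatorname{cap}\bigl(\bigcup_n\tau^n(W_0(I_0\setminus E_0))\bigr)=0$.

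The delicate point is the behaviour near the parabolic fixed point $1$, where the Möbius iterates $\sigma^n$ lose any uniform bi-Hölder control. But I only need bi-Hölder control for each individual $n$ in order to conclude that each image $\sigma^n(E_0)$ has zero capacity; countable subadditivity, the same device used in Proposition \ref{prop1}, then absorbs the accumulation at $1$ (and the singleton $\{1\}$ is of zero capacity for free). This is what makes the equivariant extension from a single fundamental arc compatible with the log-singular conclusion on the whole circle.
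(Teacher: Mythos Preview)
Your proposal is correct and follows essentially the same approach as the paper: define $W$ on a fundamental arc $I_0=\phi([0,a))$ for $\langle\sigma\rangle$ via a log-singular homeomorphism onto $J_0=\phi([0,b))$ (Proposition~\ref{prop1}), extend equivariantly by $W:=\tau^n\circ W_0\circ\sigma^{-n}$ on $\sigma^n(I_0)$ with $W(1)=1$, verify the functional equation, and deduce log-singularity from the bi-H\"older invariance of zero capacity under each fixed $\sigma^n,\tau^n$ together with countable subadditivity. Your discussion of continuity at the parabolic fixed point $1$ and of why only \emph{individual} bi-H\"older control is needed is a welcome elaboration of points the paper leaves implicit.
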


\begin{proof}
Let $I_0:= \phi([0,a))$ and $J_0:=\phi([0,b))$, and let $W: I_0 \to J_0$ be a log-singular orientation-preserving homeomorphism, as in Proposition \ref{prop1}. For $n \in \mathbb{Z}$, set $I_n := \sigma^n(I_0)$ and $J_n := \tau^n(J_0)$. Note that the subarcs $I_n$ are pairwise disjoint and that
$$\bigcup_{n \in \mathbb{Z}} I_n = \mathbb{T} \setminus \{1\},$$
and similarly for the $J_n$'s. Now, extend $W$ to all of $\mathbb{T}$ by setting
$$W(z) := (\tau^n \circ W \circ \sigma^{-n})(z) \qquad (z \in I_n)$$
and $W(1):=1$. Clearly, the map $W : \mathbb{T} \to \mathbb{T}$ thereby obtained is an orientation-preserving homeomorphism. Moreover, if $z \in I_n$ for some $n$, then $\sigma(z) \in I_{n+1}$, so that
\begin{eqnarray*}
W(\sigma(z)) &=& (\tau^{n+1} \circ W \circ \sigma^{-(n+1)})(\sigma(z))\\
&=& (\tau \circ (\tau^n \circ W \circ \sigma^{-n}))(z)\\
&=& (\tau \circ W)(z),
\end{eqnarray*}
which shows that $W$ satisfies Equation (\ref{functionalequation}).

It remains to prove that $W: \mathbb{T} \to \mathbb{T}$ satisfies the log-singular condition. Let $E_0 \subset I_0$ such that $\operatorname{cap}(E_0)=0$ and $\operatorname{cap}(W(I_0 \setminus E_0))=0$. For $n \in \mathbb{Z}$, let $E_n:=\sigma^n(E_0) \subset I_n$. Note that $\operatorname{cap}(E_n)=0$ for all $n$, since bi-H\"{o}lder homeomorphisms preserve sets zero logarithmic capacity. It follows that $E:=\cup_n E_n$ also has capacity zero, by subadditivity. Finally, we have
\begin{eqnarray*}
W(\mathbb{T} \setminus E) &=& \bigcup_{n \in \mathbb{Z}} W(I_n \setminus E_n) \cup \{1\}\\
&=& \bigcup_{n \in \mathbb{Z}} W(\sigma^n(I_0 \setminus E_0))  \cup \{1\}\\
&=& \bigcup_{n \in \mathbb{Z}} \tau^n(W(I_0 \setminus E_0))  \cup \{1\},
\end{eqnarray*}
which shows that $\operatorname{cap}(W(\mathbb{T} \setminus E))=0$, again by subadditivity and the fact that bi-H\"{o}lder homeomorphisms preserve sets of zero logarithmic capacity.

This completes the proof of the lemma.

\end{proof}

We can now proceed with the proof of Theorem \ref{mainthm}.

\begin{proof}
Let $a,b >0$ with $a \neq b$, and let $W: \mathbb{T} \to \mathbb{T}$, $\sigma$ and $\tau$ be as in Lemma \ref{lemlog}, so that
$$W \circ \sigma = \tau \circ W.$$
Also, let $0<\epsilon<|a-b|/4$, let $z_1, z_2$ be points in the upper half-plane $\mathbb{H}$ and lower half-plane $\mathbb{H}^{*}$ respectively, and let $K_1 \subset \mathbb{H}$ and $K_2 \subset \mathbb{H}^{*}$ be compact sets such that $z_1, z_1+a \in K_1$ and $z_2, z_2+b \in K_2$. Lastly, take $0<r<1$ sufficiently close to $1$ so that $\phi(K_1) \subset \mathbb{D}(0,r)$ and $\phi(K_2) \subset \mathbb{C} \setminus \mathbb{D}(0,1/r)$, where $\phi$ is the M\"{o}bius transformation of Lemma \ref{lemlog}.

By Theorem \ref{thmBis}, we can write $W:= g^{-1} \circ f$, where $f$ and $g$ are conformal maps of $\mathbb{D}$ and $\mathbb{D}^{*}$ onto the two complementary components $\Omega$ and $\Omega^{*}$ of a Jordan curve $\Gamma$, such that $f(1)=g(1)=\infty$,

\begin{equation}
\label{ineq1}
|f(z)-\phi^{-1}(z)| < \epsilon \qquad (|z| \leq r)
\end{equation}
and
\begin{equation}
\label{ineq2}
|g(z)-\phi^{-1}(z)| < \epsilon \qquad (|z| \geq 1/r).
\end{equation}
In particular, the above inequalities hold for $z \in \phi(K_1)$ and $z \in \phi(K_2)$ respectively.

Now, define a map $F$ by
\begin{displaymath}
F(z) = \left\{ \begin{array}{ll}
(f \circ \sigma \circ f^{-1})(z) & \textrm{if $z \in \Omega$}\\
(g \circ \tau \circ g^{-1})(z) & \textrm{if $z \in \Omega^{*}$},
\end{array} \right.
\end{displaymath}

so that $F$ is conformal on $\RiemannSphere \setminus \Gamma$. Also, the equation $W \circ \sigma = \tau \circ W$ on $\mathbb{T}$ is equivalent to
$$f \circ \sigma \circ f^{-1} = g \circ \tau \circ g^{-1}$$
on $\Gamma$, so that $F$ extends to a homeomorphism of $\RiemannSphere$. Clearly, this map satisfies $F(\Gamma)=\Gamma$.

It remains to prove that $F$ is not a M\"{o}bius transformation. Suppose, in order to obtain a contradiction, that $F$ is M\"{o}bius. First, note that $F(\infty)=\infty$, so that $F$ has to be linear, say $F(z)=cz+d$. Now, the map $F$ can be rewritten as
\begin{displaymath}
F(z) = \left\{ \begin{array}{ll}
(\tilde{f} \circ \tilde{\sigma} \circ \tilde{f}^{-1})(z) & \textrm{if $z \in \Omega$}\\
(\tilde{g} \circ \tilde{\tau} \circ \tilde{g}^{-1})(z) & \textrm{if $z \in \Omega^{*}$},\\
\end{array} \right.
\end{displaymath}
where $\tilde{f}:=f \circ \phi : \mathbb{H} \to \Omega$, $\tilde{g}:= g \circ \phi : \mathbb{H}^{*} \to \Omega^{*}$, and $\tilde{\sigma}(z):=z+a$, $\tilde{\tau}(z):=z+b$ are as in Lemma \ref{lemlog}. It is easy to see from this that $F$ has only one fixed point, at infinity, so that $c=1$.

Now, note that for $z \in \mathbb{H}$, we have
$$f(\phi(z))+d=\tilde{f}(z) + d = F(\tilde{f}(z)) = (\tilde{f} \circ \tilde{\sigma})(z) = \tilde{f}(z+a) = f(\phi(z+a)),$$
so that in particular,
$$d-a = f(\phi(z_1+a)) - f(\phi(z_1))-a = f(\phi(z_1+a)) - (z_1+a) - f(\phi(z_1))+z_1.$$
By Inequality (\ref{ineq1}) with $z$ replaced by $\phi(z_1), \phi(z_1+a) \in \phi(K_1)$, we get
$$|d-a| \leq \epsilon + \epsilon = 2\epsilon.$$
Similarly, using the formula for $F$ on $\Omega^{*}$ and Inequality (\ref{ineq2}), we get
$$|d-b| \leq 2\epsilon,$$
and combining the two inequalities yields
$$|a-b| \leq |a-d| + |d-b| \leq 4\epsilon,$$
which contradicts the choice of $\epsilon$. It follows that $F$ is not a M\"{o}bius transformation.

Finally, we can take $\Gamma$ to have zero area, by Theorem \ref{thmBis}.

This completes the proof of Theorem \ref{mainthm}.

\end{proof}

\section{Non-injectivity of conformal welding for curves of positive area}
\label{sec3}

In this section, we mention that the argument described in the introduction, although incorrect in general, can nonetheless be made to work in the case of curves with positive area.

\begin{theorem}
\label{thm2}
If $\Gamma$ is a Jordan curve with positive area, then there is another curve having the same welding homeomorphism, but which is not a M\"{o}bius image of $\Gamma$.
\end{theorem}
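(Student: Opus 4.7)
The idea is a dimension count: positive area gives an infinite-dimensional family of quasiconformal deformations of $\Gamma$, whereas the set of self-homeomorphisms of $\Gamma$ conformal off $\Gamma$, together with the M\"obius group, is only finite-dimensional. I argue by contrapositive: suppose every Jordan curve with welding $h_\Gamma$ is a M\"obius image of $\Gamma$. Equivalently, every $F \in \operatorname{CH}(\Gamma)$ factors as $F = T \circ H$ with $T$ M\"obius and $H \in S := \{H \in \operatorname{CH}(\Gamma) : H(\Gamma) = \Gamma\}$.

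For every Beltrami coefficient $\mu \in L^\infty(\RiemannSphere)$ with support in $\Gamma$ and $\|\mu\|_\infty < 1$, the measurable Riemann mapping theorem produces a normalized quasiconformal $F_\mu \in \operatorname{CH}(\Gamma)$ satisfying $\bar\partial F_\mu = \mu \, \partial F_\mu$, depending holomorphically on $\mu$ by Ahlfors--Bers. On the other hand, every $H \in S$ restricts to a conformal automorphism of each component of $\RiemannSphere \setminus \Gamma$, so conjugation by fixed Riemann maps $f:\mathbb{D}\to\Omega$ and $g:\mathbb{D}^*\to\Omega^*$ identifies $S$ with a subset of the $6$-real-dimensional Lie group $\operatorname{Aut}(\mathbb{D}) \times \operatorname{Aut}(\mathbb{D}^*)$. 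Combined with the $6$-real-dimensional M\"obius factor, the assumption forces every $F_\mu$ into $\operatorname{Mob} \cdot S$, which is contained in the image of the smooth composition map $\operatorname{Mob} \times S \to \operatorname{CH}(\Gamma)$ and has real dimension at most $12$.

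For $\mu$ in a small enough $L^\infty$-neighborhood of $0$, the map $F_\mu$ is uniformly close to the identity, so the factorization $F_\mu = T_\mu \circ H_\mu$ may be chosen to depend smoothly on $\mu$ via a continuous local section of the composition map (whose fiber over the identity is the closed subgroup $\operatorname{Mob}(\Gamma,\Gamma)$ of M\"obius transformations preserving $\Gamma$). Differentiating $\mu \mapsto F_\mu$ at $\mu = 0$ then yields a continuous linear map $\dot F : L^\infty(\Gamma, dA) \to V$ where $\dim_{\mathbb{R}} V \leq 12$. The crucial observation is that $\dot F$ is \emph{injective}: differentiating the Beltrami equation at $t = 0$ gives $\bar\partial \dot F[\dot\mu] = \dot\mu$, so the direction $\dot\mu$ is recovered from $\dot F[\dot\mu]$ via the Cauchy--Riemann operator. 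Since $\Gamma$ has positive Lebesgue measure, the space $L^\infty(\Gamma, dA)$ is infinite-dimensional (it contains infinitely many linearly independent indicator functions of pairwise disjoint positive-area subsets of $\Gamma$), and no injective linear map from an infinite-dimensional vector space into a finite-dimensional one exists. This contradiction proves the theorem.

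The main obstacle will be to justify the smoothness of the local factorization $F_\mu = T_\mu \circ H_\mu$ near $\mu = 0$, so that the tangent vector $\dot F[\dot\mu]$ really lies in the finite-dimensional tangent space $V$. This should reduce to a standard implicit-function-theorem argument once it is observed that $\operatorname{Mob}(\Gamma,\Gamma)$ is a closed subgroup of the $6$-dimensional Lie group $\operatorname{Mob}$, providing a local continuous section of the projection $\operatorname{Mob} \times S \to \operatorname{Mob} \cdot S$ near the identity.
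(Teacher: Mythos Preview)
Your core strategy---a dimension count pitting the infinite-dimensional family $\{F_\mu : \operatorname{supp}\mu\subset\Gamma\}$ against the finite-dimensional $\operatorname{Mob}\cdot S$---is exactly the idea the paper sketches (it only outlines the argument here, deferring full details to \cite{MFB}). Your identification of $S$ with a closed subgroup of $\operatorname{Aut}(\mathbb{D})\times\operatorname{Aut}(\mathbb{D}^*)$ is correct and gives the same $12$-real-dimensional bound.

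The genuine difference is the technical device. The paper invokes \emph{Brouwer's Invariance of Domain} rather than differentiation: choose $13$ linearly independent Beltrami coefficients supported on $\Gamma$, so that $t\mapsto F_{\mu(t)}$ is a continuous injection from an open set in $\mathbb{R}^{13}$ into $\operatorname{CH}(\Gamma)$ (injectivity by uniqueness in the measurable Riemann mapping theorem, continuity by Ahlfors--Bers). Under the contrapositive hypothesis, composing with a continuous choice of $(T_\mu,\sigma_\mu,\tau_\mu)$ gives a continuous injection into a $12$-dimensional manifold, and Invariance of Domain forbids this. The point is that this route needs only \emph{continuity} of the local section $F_\mu\mapsto(T_\mu,H_\mu)$, which follows from the fact that $\operatorname{Mob}(\Gamma,\Gamma)$ is a closed (hence Lie) subgroup of $\operatorname{Mob}$ and standard quotient-manifold structure.

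Your differentiation route, by contrast, needs the section to be \emph{differentiable} at $\mu=0$ so that $\dot F[\dot\mu]$ genuinely lands in the finite-dimensional tangent space $V$. You flag this as the main obstacle, and it is: a merely continuous section does not let you differentiate, and upgrading to a smooth section requires knowing more about the local structure of $\operatorname{Mob}\cdot S$ inside an infinite-dimensional space of homeomorphisms, which is delicate. Your injectivity argument for $\dot F$ via $\bar\partial\dot F[\dot\mu]=\dot\mu$ is fine once that hurdle is cleared. So your plan is not wrong, but the Invariance of Domain version buys you a cleaner path by sidestepping precisely the smoothness issue you identified.
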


The idea of the proof is quite simple. Since $\Gamma$ has positive area, it is in particular not removable, so there is a non-M\"{o}bius homeomorphism $F : \RiemannSphere \to \RiemannSphere$ which is conformal off $\Gamma$. As already mentioned, the curve $F(\Gamma)$ has the same welding as $\Gamma$, thus is a good candidate for the curve we want to construct. Unfortunately, as we saw in Section \ref{sec2}, it may happen that this curve is a M\"{o}bius image of the original one, even though $F$ itself is not M\"{o}bius.

However, in the positive-area case, it is easy to see using the measurable Riemann mapping theorem that the collection of non-M\"{o}bius elements of $\operatorname{CH}(\Gamma)$ is infinite-dimensional, in some sense. A dimension argument relying on Ahlfors-Bers and Brouwer's Invariance of Domain can then be applied to conclude that there must be at least one non-M\"{o}bius $F \in \operatorname{CH}(\Gamma)$ such that $F(\Gamma) \neq T(\Gamma)$ for every M\"{o}bius transformation $T$.

As far as we know, Theorem \ref{thm2} was first stated by Katznelson, Nag and Sullivan in \cite{KNS}. See \cite[Theorem 4.22]{MFB} for a complete and detailed proof. It would be very interesting to find a more constructive proof though.

\section{Concluding remarks}
\label{sec4}

In view of Theorem \ref{thm2}, Question \ref{ques1} can be reduced to the following.

\begin{question}
\label{ques2}
If $\Gamma$ is a non-removable Jordan curve with zero area, does there necessarily exist another curve having the same conformal welding homeomorphism, but which is not a M\"{o}bius image of $\Gamma$?
\end{question}

As observed in Section \ref{sec3}, the size of $\operatorname{CH}(\Gamma)$ may be relevant here.

\begin{question}
If $\Gamma$ is a non-removable Jordan curve with zero area, is the collection of non-M\"{o}bius elements of $\operatorname{CH}(\Gamma)$ necessarily large, or infinite-dimensional, in some sense?
\end{question}

If $\Gamma$ is non-removable, then there exists at least one non-M\"{o}bius homeomorphism $F:\RiemannSphere \to \RiemannSphere$ conformal off $\Gamma$, but as far as we know, it is still open in general whether there must exist another non-M\"{o}bius element of $\operatorname{CH}(\Gamma)$ which is not of the form $T \circ F$, for $T$ M\"{o}bius.

Finally, we conclude by mentioning that a positive answer to Question \ref{ques2} would follow if one could prove that there always exists a non-M\"{o}bius homeomorphism of $\RiemannSphere$ conformal off $\Gamma$ which maps $\Gamma$ onto a curve of positive area.

\acknowledgments{The author thanks Chris Bishop and Don Marshall for helpful discussions.}

\bibliographystyle{amsplain}

\end{document}